\documentclass[11pt]{amsart}
\usepackage[T1]{fontenc}
\usepackage{lmodern}
\usepackage{amsmath,amssymb,amsthm,mathtools}
\usepackage[a4paper,margin=30mm]{geometry}
\usepackage{microtype}
\usepackage[hidelinks]{hyperref}

\newtheorem{theorem}{Theorem}[section]
\newtheorem{lemma}[theorem]{Lemma}
\newtheorem{proposition}[theorem]{Proposition}
\newtheorem{corollary}[theorem]{Corollary}
\theoremstyle{remark}
\newtheorem{remark}[theorem]{Remark}

\newcommand{\Z}{\mathbb Z}
\newcommand{\Q}{\mathbb Q}
\newcommand{\eps}{\varepsilon}
\newcommand{\vtwo}{v_2}

\begin{document}

\title[Prime-power Diophantine tuples]{Prime-power Diophantine tuples}
\author{Andrej Dujella}
\subjclass[2020]{Primary 11D09; Secondary 11D45}
\keywords{Diophantine tuples, property $D(n)$, prime powers, gap principle, elimination}

\dedicatory{Dedicated to the memory of my dear friend and coauthor Florian Luca}

\begin{abstract}
A positive $D(n)$-$m$-tuple is a set
$A=\{a_1,\ldots,a_m\}$ of distinct positive integers such that
$a_i a_j+n$ is a square for every $i\ne j$. 
In 2005, Dujella and Luca obtained an absolute bound for the cardinality of a
$D(p)$- or $D(-p)$-tuple of positive integers, uniformly in the prime $p$.
We extend the underlying gap argument to prime powers. 
An explicit toric elimination certificate, proved by
elementary linear algebra, replaces the unsaturated homogeneous elimination
step and is valid modulo every prime power. Explicit degree and height bounds
for this eliminant yield a uniform gap principle. Combined with the general
bound for bounded $|n|$, this shows that every positive reduced
$D(\pm p^r)$-tuple (i.e. tuple with elements not divisible by $p$) 
has less than $2^{121}$ elements, independently of $p$
and $r$. Thus, every positive $D(\pm p)$-tuple has at most $2^{121}$
elements, positive $D(\pm p^2)$-tuples have less than $2^{122}$ elements,
and the maximal cardinality of a positive $D(\pm p^r)$-tuple is $O(r)$
uniformly in $p$. In particular, this answers Problem~5.6 of~\cite{OpenProblems}.
\end{abstract}

\maketitle

\section{Introduction}

Let $n$ be a nonzero integer. A $D(n)$-$m$-tuple is a set
$A=\{a_1,\ldots,a_m\}$ of distinct nonzero integers such that
$a_i a_j+n$ is a square for every $i\ne j$. 
If elements of a $D(n)$-$m$-tuple are positive integers, 
we call such set a positive $D(n)$-$m$-tuple. 
Diophantus found a $D(256)$-quadruple $\{1, 33, 68, 105\}$
while the first $D(1)$-quadruple, the set $\{1, 3, 8, 120\}$, was found by Fermat 
(see Sections 1.1 and 1.2 of~\cite{Dbook}). 

Let $M_n^+$ denote the
supremum of the cardinalities of positive $D(n)$-tuples. 
By an elementary argument, Brown \cite{Brown} proved that 
$M_n^+=3$ if $n\equiv 2\pmod{4}$. In particular, $M_2^+=3$ and $M_{-2}^+=3$. Other known values of $M_n^+$ 
(obtained by much more involved methods, in particular, Baker's method on linear forms in 
logarithms of algebraic numbers) are: 
$M_1^+=4$ (by He, Togb\'e, and Ziegler \cite{HTZ}), 
$M_4^+=4$ (by Bliznac Trebje\v{s}anin and Filipin \cite{BTF}), 
and $M_{-1}^+=3$ (by Bonciocat, Cipu, Mignotte \cite{BCM}); 
their result also implies $M_{-4}^+=3$, since all elements of a $D(-4)$-quadruple are even. 

Dujella and Luca~\cite{DL} obtained the uniform estimate
$M_p^+,M_{-p}^+<3\cdot2^{168}$ for primes $p$, together with a bound
depending only on the number of prime divisors in the squarefree setting.
For general $n$, Yip~\cite{Yip} proved the asymptotic estimate
\[
 M_n^+\le (2+o(1))\log |n|\qquad (|n|\to\infty).
\]
In this paper, for prime powers we seek a bound independent of
the size of the prime, despite the repeated prime factor. Section~5.2.4
of~\cite{Dbook} gives a concise exposition of the gap-principle strategy for
prime parameters. The passage to prime powers requires concentration of the
full $p$-power in one factor, a separate $2$-adic analysis, and a replacement
for the homogeneous elimination step. The latter is supplied below by an
explicit toric certificate, which works on the locus where all tuple
coordinates are units modulo $p^r$. 
If
$n=\eps p^r$, where $p$ is prime and $\eps\in\{\pm1\}$, we call such a
$D(n)$-tuple \emph{reduced} if $p\nmid a$ for every $a\in A$.

Put
\[
 C_{\rm red}:=2^{121}.
\]

\begin{theorem}[reduced prime powers]\label{thm:reduced}
Let $p$ be a prime, $r\ge1$, and $\eps\in\{\pm1\}$. Every positive reduced
$D(\eps p^r)$-tuple has less than $C_{\rm red}$ elements. The bound is
independent of $p$, $r$, and $\eps$.
\end{theorem}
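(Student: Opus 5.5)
The plan follows the Dujella--Luca strategy for $D(\pm p)$, adapted to $n=\eps p^r$. Let $A$ be a positive reduced $D(\eps p^r)$-tuple, so $p\nmid a$ for all $a\in A$. We split $A$ into a bounded number of classes on which the relevant residues and square roots behave uniformly. Then we show that, within a class, elements grow at least doubly exponentially once they pass a threshold, while they cannot grow that fast without contradicting an elimination identity.

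\emph{Step 1: small elements.} Elements with $a\le |n|^{K}$ for a suitable absolute $K$ are handled by the general bound for bounded $|n|$. More precisely, we would use Yip-type estimates and the standard count: the number of elements of a $D(n)$-tuple in a dyadic range $[Z,2Z]$ is $O(1)$ once $Z>|n|^{c}$, and is $O(\log|n|)$ overall below $|n|^{K}$. This alone is not uniform in $p$. So we instead invoke the paper's uniform treatment of the range $a<p^{O(r)}$, which comes from the gap principle applied to triples $a<b<c$ with $c$ large relative to $b$.

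\emph{Step 2: concentration and classing.} For $a,b\in A$ write $ab+n=x^2$. Since $p\nmid ab$, factoring $(x-\sqrt{\cdot})$-type expressions shows that the whole power $p^r$ must lie in one factor. We also do a separate $2$-adic analysis (splitting by $v_2$ and residues modulo $8$) to control the contribution of $p=2$ and of even elements. Pigeonholing on these sign and residue patterns costs a bounded factor, say $2^{O(1)}$.

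\emph{Step 3: gap principle and elimination.} For a quadruple $a<b<c<d$ inside a class, we need three ingredients:
\begin{itemize}
\item the classical argument with the auxiliary $e=a+b+c+2abc/n\pm2rst/n$-style identity, which gives $c\gg b^{\delta}$;
\item the toric elimination certificate, which provides a nonzero polynomial relation among the coordinates and is valid modulo $p^r$ on the unit locus;
\item the explicit degree and height bounds for that eliminant.
\end{itemize}
Combining these with Step 2, we would derive a uniform gap principle $a_{i+1}>a_i^{\lambda}$ with an absolute $\lambda>1$ for large elements. The growth is then played against an upper bound coming from the eliminant's height, which limits how many elements can exceed the threshold.

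The \textbf{main obstacle} is making the elimination step, and hence the gap principle, uniform in $r$. With a plain prime, nonvanishing modulo $p$ suffices. With $p^r$, we need the certificate to remain nonzero modulo every prime power, and the height bounds must not scale with $r$. We would first try to use the unit-locus hypothesis (reducedness) to show the eliminant is an invertible multiple modulo $p^r$.

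Finally, we would multiply the class count by the length bound of each chain and check that the total stays below $2^{121}$.
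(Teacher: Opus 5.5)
There is a genuine gap. Your outline names the right ingredients, but it does not assemble them into an argument, and the mechanism sketched in Step~3 would not work.

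\textbf{How the eliminant is actually used.} The eliminant $Q$ is a polynomial in the five \emph{parameters} $T_i=64\Lambda_i$. It is not a relation among the tuple coordinates.
\begin{itemize}
\item Reducedness enters only to make $t_1t_2t_3t_4$ a unit modulo $N$. This gives $Q(64\Lambda_1,\ldots,64\Lambda_5)\equiv0\pmod N$. Nothing about $Q$ being ``nonzero modulo every prime power'' is needed.
\item The height bound $h(Q)64^q<2^{2^{116}}$ has one job. If all fifteen $\lambda_{j,i}$ are below $N^{\delta}$ and $N>2^{2^{117}}$, it gives $|Q(64\Lambda)|<N$, so the congruence becomes the exact equation $Q(64\Lambda)=0$.
\item Turning this equation into a contradiction requires a step your plan omits entirely. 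One shows $Q(64\Phi_\eps(z))$ is not identically zero in the normalized elements: the Jacobian of $\Phi_\eps$ at $(2,3,4,5,6)$ is nonzero, so its image contains an open set. One then takes the norm from the multiquadratic extension generated by the $\sqrt{z_iz_j+\eps}$. Multiplying by the diagonal and regularity factors gives a nonzero polynomial $R_\eps$ of degree $\ell<2^{35}$.
\item The nonvanishing lemma then picks, inside any $\ell+1$ consecutive elements, a labelled quintuple with $R_\eps\ne0$. For it, one of three things happens. Some triple is in the $e$-branch, giving $c>N^{1/8}b$. Or some $\lambda\ge N^{\delta}$, giving $d>\sqrt\lambda\,a\ge N^{\gamma}a$ (or $c>N^{1/16}a$ when $\eps=-1$). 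Or $Q(64\Lambda)=0$, which is excluded.
\end{itemize}
The resulting gap is $a_{i+\ell}>N^{\gamma}a_i$: multiplicative in $N$, over blocks of length $\ell$. The power gap $a_{i+1}>a_i^{\lambda}$ you propose does not follow from any of these ingredients. In particular, the easy branch gives a jump relative to $N$, not $c\gg b^{\delta}$, and it needs the lower bound on $a$.

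\textbf{The counting framework is misplaced.} The bound $M_n^+<15.476\log|n|$ is applied for small \emph{modulus}, $N\le2^{2^{117}}$. That is exactly the range where the height argument fails. It is not a tool for small elements, and appealing to ``the paper's uniform treatment'' of them is circular.

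For $N>2^{2^{117}}$ the paper proceeds as follows.
\begin{itemize}
\item Small elements are removed by the elementary lower-bound lemma: at most two elements lie below $N^{1/4}$, or one below $N^{1/2}$ when $\eps=-1$.
\item Large elements are removed by the known estimate that at most $21$ elements exceed $N^3$. This upper cutoff does not come from the eliminant's height.
\item With all remaining elements in $(N^{1/4},N^3]$, the gap gives $m_{\rm rem}<\ell(1+11/(4\gamma))<2^{64}$.
\end{itemize}
No pigeonholing into sign or residue classes is needed. The concentration you need is not a pairwise factorization of $ab+n=x^2$. It is for the triple product $e\bar e=N^2\mathcal R$, with $\{\vtwo(e),\vtwo(\bar e)\}=\{2,2r-2\}$ when $p=2$. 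This is what yields the full-modulus congruences $x_1x_6\equiv x_2x_5\equiv x_3x_4\pmod N$ that define the $\lambda$'s.
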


\begin{corollary}[prime parameters]\label{cor:prime}
Let $p$ be a prime and $\eps\in\{\pm1\}$. Every positive $D(\eps p)$-tuple
has at most $C_{\rm red}=2^{121}$ elements.
\end{corollary}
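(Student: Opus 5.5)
The corollary will follow from Theorem~\ref{thm:reduced} with $r=1$, once we show that a positive $D(\eps p)$-tuple is automatically reduced, except for at most one element, or in a small exceptional configuration that can be bounded separately. Let $A$ be a positive $D(\eps p)$-tuple and put
\[
 A_0=\{a\in A: p\mid a\}.
\]

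\emph{Step 1: $|A_0|\le 1$.} Suppose $a,b\in A_0$ with $a\ne b$. Then $ab+\eps p\equiv \eps p \pmod{p^2}$. So $ab+\eps p$ is divisible by $p$ exactly once, and it cannot be a square (it is nonzero, since $ab\ge p^2>p$). This is a contradiction, so $A_0$ has at most one element.

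\emph{Step 2: applying the theorem.} The set $A\setminus A_0$ is still a positive $D(\eps p)$-tuple, and it is reduced. By Theorem~\ref{thm:reduced} it has fewer than $C_{\rm red}$ elements, that is, at most $C_{\rm red}-1$. Adding back the possible element of $A_0$ gives
\[
 |A|\le |A\setminus A_0|+1\le C_{\rm red}=2^{121}.
\]

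The only point needing care is Step~1, namely that $ab+\eps p\neq 0$ and that it has exact $p$-valuation $1$. Both facts are immediate from positivity and from $p^2\mid ab$. So there is no real obstacle: the content is in Theorem~\ref{thm:reduced}, and the corollary only adds the one removable non-reduced element. This accounts for the "at most" in the corollary versus the strict "less than" in the theorem.
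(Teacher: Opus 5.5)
Your proposal is correct and matches the paper's proof: the paper also shows that two multiples of $p$ would give $ab+\eps p=p(p\alpha\beta+\eps)$ with $p$-adic valuation exactly $1$. It then applies Theorem~\ref{thm:reduced} to the reduced remainder, which has at most $C_{\rm red}-1$ elements. The ``small exceptional configuration'' you mention at the start never arises, so you can drop that clause.
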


\begin{corollary}[prime squares]\label{cor:p2}
Let $p$ be a prime and $\eps\in\{\pm1\}$. Every positive
$D(\eps p^2)$-tuple has less than $C_{\rm red}+4<2^{122}$ elements. If
arbitrary nonzero integer elements are allowed, every $D(p^2)$-tuple has
less than $2C_{\rm red}+8<2^{123}$ elements, whereas every
$D(-p^2)$-tuple satisfies the same bound as in the positive case.
\end{corollary}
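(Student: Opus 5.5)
The plan is to split a positive $D(\eps p^2)$-tuple $A$ according to divisibility by $p$. The part coprime to $p$ is handled by Theorem~\ref{thm:reduced}, and the part divisible by $p$ is rescaled to a $D(\pm1)$-tuple, to which the known values $M_1^+=4$ and $M_{-1}^+=3$ from the introduction apply. Write
\[
A_0=\{a\in A:\ p\nmid a\},\qquad A_1=\{a\in A:\ p\mid a\}.
\]

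First, $A_0$ is a positive reduced $D(\eps p^2)$-tuple, since any subset of a $D(n)$-tuple is again a $D(n)$-tuple. By Theorem~\ref{thm:reduced} with $r=2$, we get $|A_0|<C_{\rm red}$.

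Next, take distinct $a,b\in A_1$ and write $ab+\eps p^2=x^2$. Since $p\mid a$ and $p\mid b$, we have $p^2\mid ab$. Hence $p^2\mid x^2$, so $p\mid x$. Dividing by $p^2$ gives
\[
\frac ap\cdot\frac bp+\eps=\Bigl(\frac xp\Bigr)^2 .
\]
So $\{a/p: a\in A_1\}$ is a set of distinct positive integers forming a positive $D(\eps)$-tuple. If $\eps=1$, then $|A_1|\le M_1^+=4$; if $\eps=-1$, then $|A_1|\le M_{-1}^+=3$. In either case $|A|=|A_0|+|A_1|<C_{\rm red}+4<2^{122}$.

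For arbitrary nonzero integer elements, first take $n=p^2$. Split $A$ into its positive and negative elements. Because $(-a)(-b)=ab$, the set $-A^-=\{-a: a\in A,\ a<0\}$ is a positive $D(p^2)$-tuple. By the positive case, each of $A^+$ and $-A^-$ has fewer than $C_{\rm red}+4$ elements, so $|A|<2C_{\rm red}+8<2^{123}$.

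Now take $n=-p^2$. If $a>0>b$, then $ab-p^2<0$, which is not a square. So all elements of $A$ have the same sign. Replacing $A$ by $-A$ if necessary, we may assume $A$ is positive, and then the positive bound $|A|<C_{\rm red}+4$ applies.

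There is no real obstacle here. The only point needing care is the divisibility step $p^2\mid ab\Rightarrow p\mid x$, which is what lets the non-reduced part be rescaled to a genuine $D(\pm1)$-tuple of distinct positive integers.
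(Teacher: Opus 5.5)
Your proof is correct and follows essentially the same route as the paper. The paper splits by divisibility by $p$ (in the general recurrence $M_{\eps p^r}^+\le C_{\rm red}+M_{\eps p^{r-2}}^+$, specialized to $r=2$), rescales the divisible part to a positive $D(\pm1)$-tuple bounded by $4$ or $3$, and then handles arbitrary signs by the same positive/negative splitting and the same sign argument for $-p^2$ that you use.
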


More generally, for $r\ge2$ we shall prove
\begin{equation}\label{eq:recursion-intro}
 M_{\eps p^r}^+\le C_{\rm red}+M_{\eps p^{r-2}}^+.
\end{equation}
Therefore, 
\[
 M_{\eps p^{2s}}^+\le sC_{\rm red}+4,
 \qquad
 M_{\eps p^{2s+1}}^+\le (s+1)C_{\rm red},
\]
and hence $M_{\pm p^r}^+=O(r)$ with an absolute effective implied constant,
uniformly in $p$.

\section{Preliminary lower bounds}

Write $N=p^r$. The following elementary reduction will be used repeatedly.

\begin{lemma}
\label{lem:lower}
Let $A=\{a_1<a_2<\cdots\}$ be a positive reduced $D(\eps N)$-tuple.
If $\eps=1$, then $a_3>N^{1/4}$, so at most two elements fail the lower
bound $a>N^{1/4}$. If $\eps=-1$, then $a_2>N^{1/2}$, so at most one
element fails the stronger lower bound $a>N^{1/2}$.
\end{lemma}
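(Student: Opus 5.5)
The plan is to treat the two signs separately, using in each case only the defining square conditions on the smallest elements of $A$ and no earlier results. The case $\eps=-1$ is immediate once a degenerate square is excluded; the case $\eps=1$ rests on the fact that squares slightly larger than $N$ are spaced far apart.

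\emph{Case $\eps=-1$.} Write $a_1a_2-N=x^2$ with $x\ge0$. First I rule out $x=0$. If $x=0$, then $a_1a_2=N=p^r$. Since the tuple is reduced, $p\nmid a_1$ and $p\nmid a_2$, which forces $a_1=a_2=1$. This contradicts distinctness. Hence $a_1a_2>N$. Because $a_1<a_2$, we get $a_2^2>a_1a_2>N$, that is, $a_2>N^{1/2}$. So only $a_1$ can fail the bound $a>N^{1/2}$. This exclusion of $x=0$ is the only place where reducedness is used.

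\emph{Case $\eps=1$.} Suppose, for contradiction, that $a_1<a_2<a_3\le N^{1/4}$. Then every product satisfies $a_ia_j\le a_2a_3<a_3^2\le N^{1/2}$. Take the two distinct products $a_1a_2<a_1a_3$ and write $a_1a_2+N=x^2$ and $a_1a_3+N=y^2$ with $x,y>0$. Then $x<y$ are distinct integers, both strictly greater than $\sqrt N$. For $0<m<N^{1/2}$ we have
\[
\sqrt{N+m}<\sqrt N+\frac{m}{2\sqrt N}<\sqrt N+\tfrac12 .
\]
So $x$ and $y$ both lie in the open interval $(\sqrt N,\sqrt N+\tfrac12)$, whose length is less than $1$. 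This is impossible for two distinct integers. Hence $a_3>N^{1/4}$, and at most the two elements $a_1,a_2$ fail the bound $a>N^{1/4}$.

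There is no real obstacle; the only points needing care are the following. For $\eps=1$, the chain $a_ia_j<N^{1/2}$ must be strict so that the estimate above applies. For $\eps=-1$, the degenerate case $a_1a_2=N$ is exactly where reducedness enters: without it, pairs such as $\{1,p^r\}$ would break the argument.
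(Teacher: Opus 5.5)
Your proof is correct and essentially the paper's. The $\eps=-1$ case is identical, and for $\eps=1$ you use the same spacing of squares just above $N$ (via $a_1a_2,a_1a_3$ and an interval of length $\tfrac12$), where the paper bounds $a_3(a_2-a_1)=x_{23}^2-x_{13}^2>2\sqrt N$ directly. One small correction to your closing remark: reducedness is not actually needed, since $a_2^2>a_1a_2\ge N$ is already strict because $a_1<a_2$, so a pair like $\{1,p^r\}$ (where $a_2=p^r>N^{1/2}$) breaks nothing.
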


\begin{proof}
Suppose first that $\eps=1$, and write
\[
 a_1a_3+N=x_{13}^2,\qquad a_2a_3+N=x_{23}^2.
\]
Then $x_{23}>x_{13}>\sqrt N$, and therefore
\[
 a_3(a_2-a_1)=x_{23}^2-x_{13}^2
 =(x_{23}-x_{13})(x_{23}+x_{13})>2\sqrt N.
\]
Since $a_2-a_1<a_3$, this gives $a_3>N^{1/4}$.

Now let $\eps=-1$. Reducedness rules out $x_{12}=0$ in
$a_1a_2-N=x_{12}^2$, because $a_1a_2=N$ would force a factor $p$ in one
of $a_1,a_2$. Thus
\[
 a_1a_2=N+x_{12}^2>N,
\]
and hence $a_2^2>a_1a_2>N$.
\end{proof}

\section{The prime-power factorization}

Write
\[
 N=p^r,\qquad n=\eps N.
\]
Let $a<b<c$ be elements of a reduced $D(\eps N)$-tuple and write
\[
 ab+\eps N=x^2,\qquad ac+\eps N=z^2,\qquad bc+\eps N=y^2,
\]
with $x,y,z\ge0$.  Put
\[
 P=abc,\qquad S=a+b+c,
\]
and define, as in Section~5.2.4 of~\cite{Dbook},
\begin{equation}
 e=\eps NS+2P-2xyz,
 \qquad
 \bar e=\eps NS+2P+2xyz.
\end{equation}
A direct computation gives
\begin{equation}\label{eq:factor}
 e\bar e
 =N^2(c-a-b+2x)(c-a-b-2x).
\end{equation}
The common value in~\eqref{eq:factor} is zero exactly in the regular cases
\begin{equation}
 c=a+b\pm2x.
\end{equation}
For $D(N)$ only the plus sign can give $c>b$, while both signs may occur for
$D(-N)$.

\begin{lemma}
\label{lem:local}
Assume that the triple is not regular.
\begin{enumerate}
\item If $p$ is odd, then exactly one of $e,\bar e$ is divisible by $N^2$.
\item If $p=2$ and $r\ge3$, then
\[
 \{\vtwo(e),\vtwo(\bar e)\}=\{2,2r-2\}.
\]
In particular, exactly one of $e,\bar e$ is divisible by $N^2/4$.
\end{enumerate}
\end{lemma}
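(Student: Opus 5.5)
The plan is to study $e$ and $\bar e$ through the two combinations
\[
 e+\bar e=2(\eps NS+2P),\qquad \bar e-e=4xyz,
\]
together with the product identity~\eqref{eq:factor}. Since the triple is not regular, \eqref{eq:factor} gives $e\bar e\neq0$, and $N^2\mid e\bar e$. So in each case it suffices to bound the common $p$-part of $e$ and $\bar e$ and then read off how the $p$-adic valuation of the product is split between them.

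\emph{Odd $p$.} I will show that $p$ cannot divide both $e$ and $\bar e$. Modulo $N$ we have $x^2\equiv ab$, $y^2\equiv bc$ and $z^2\equiv ac$, so $(xyz)^2\equiv P^2 \pmod N$. Because the tuple is reduced, $p\nmid P$, and hence $p\nmid xyz$. If $p$ divided both $e$ and $\bar e$, it would divide $\bar e-e=4xyz$. Since $p$ is odd, this forces $p\mid xyz$, which is a contradiction. So one of $e,\bar e$ is prime to $p$, and the whole factor $N^2$ of $e\bar e$ must lie in the other one. That gives ``exactly one''.

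\emph{$p=2$, $r\ge3$.} Here $a,b,c$ are odd, and $x,y,z$ are odd because $x^2=ab\pm N$ is odd.

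1. Since $P$ is odd and $v_2(NS)\ge3$, we get $v_2(\eps NS+2P)=1$, so $v_2(e+\bar e)=2$. Also $v_2(\bar e-e)=v_2(4xyz)=2$.
2. Write $e=\eps NS+2(P-xyz)$. Here $P-xyz$ is even and $8\mid NS$, so $v_2(e)\ge2$. The same argument gives $v_2(\bar e)\ge2$.
3. If both valuations were at least $3$, then $8\mid \bar e-e$, contradicting step 1. Hence one of them equals exactly $2$.
4. For the other one I compute $v_2(e\bar e)$ from~\eqref{eq:factor}. Since $c-a-b$ is odd, $(c-a-b)^2-4x^2$ is odd, so $v_2(e\bar e)=v_2(N^2)=2r$. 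The remaining valuation is therefore $2r-2$.

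Since $r\ge3$, we have $2r-2\ge4>2$, so the two valuations are distinct and form the set $\{2,2r-2\}$. In particular exactly one of $e,\bar e$ is divisible by $N^2/4=2^{2r-2}$.

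The only delicate points are reducedness, which is used to make $xyz$ a unit, and the exact $2$-adic bookkeeping. The step that could fail is step 4, showing that $(c-a-b)^2-4x^2$ contributes no extra power of $2$; the parity of $c-a-b$ settles it.
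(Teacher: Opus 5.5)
Your proof is correct and follows the paper's argument. For odd $p$ you use $\bar e-e=4xyz$ with $p\nmid xyz$ together with the factorization~\eqref{eq:factor}. For $p=2$ you show that $4$ divides both $e$ and $\bar e$, that $v_2(\bar e-e)=2$, and that $v_2(e\bar e)=2r$ because $(c-a-b)^2-4x^2$ is odd. Your extra details, such as deriving $p\nmid xyz$ from $(xyz)^2\equiv P^2\pmod N$ and checking $4\mid e,\bar e$ explicitly, only expand steps the paper states briefly.
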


\begin{proof}
If $p$ is odd, then $\bar e-e=4xyz$.  Since the tuple is reduced,
$p\nmid xyz$, and hence $e$ and $\bar e$ cannot both be divisible by $p$.
Equation~\eqref{eq:factor} implies that the whole factor $N^2=p^{2r}$
divides exactly one of them.

Let $p=2$ and $r\ge3$.  The elements $a,b,c$ and the square roots $x,y,z$
are odd.  Hence $4\mid e$ and $4\mid\bar e$, while
\[
 \vtwo(\bar e-e)=\vtwo(4xyz)=2.
\]
Moreover $c-a-b$ and $x$ are odd, so $(c-a-b)^2-4x^2$ is odd.  Taking
$2$-adic valuations in~\eqref{eq:factor} gives
$\vtwo(e)+\vtwo(\bar e)=2r$.  The smaller valuation is exactly $2$, which
proves the statement.
\end{proof}

Put
\[
 \eta=\begin{cases}
 1,&p\text{ odd},\\
 4,&p=2.
 \end{cases}
\]
For $p=2$ this notation will only be used when $r\ge3$. 
Note that the cases $N=2,4$ are solved by the results mentioned in Introduction. 
For a nonregular triple, we
say that it lies in the \emph{$e$-branch} if $N^2/\eta\mid e$, and in the
\emph{$\bar e$-branch} otherwise. By Lemma~\ref{lem:local}, these are the
only two possibilities and exactly one occurs.

\section{The easy branch}

Let
\[
 \mathcal R=(c-a-b)^2-4x^2
 =a^2+b^2+c^2-2ab-2ac-2bc-4\eps N.
\]
Then $e\bar e=N^2\mathcal R$, and, since $a,b<c$,
\begin{equation}\label{eq:Rbound}
 |\mathcal R|<9c^2+4N.
\end{equation}

\begin{lemma}
\label{lem:easy}
Assume that $N>2^{2^{117}}$, the triple is nonregular, and
$N^2/\eta\mid e$.  Assume also $a>N^{1/4}$ if $\eps=1$, and
$a>N^{1/2}$ if $\eps=-1$.  Then
\[
 c>N^{1/8}b.
\]
\end{lemma}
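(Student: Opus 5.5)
The plan is to use the divisibility $N^2/\eta\mid e$ only to force $|e|$ to be large. Then the identity $e\bar e=N^2\mathcal R$ forces $|\bar e|$ to be small compared with $c^2$. Finally, comparing this with the explicit expression for $\bar e$, which is of size roughly $4abc$, gives the gap.

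First, since the triple is nonregular, $\mathcal R\neq0$, so $e\neq0$. Divisibility then gives $|e|\ge N^2/\eta$. From $e\bar e=N^2\mathcal R$ and \eqref{eq:Rbound} we obtain
\[
 |\bar e|=\frac{N^2|\mathcal R|}{|e|}\le \eta|\mathcal R|<4(9c^2+4N).
\]

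Next I will bound $\bar e=\eps NS+2P+2xyz$ from below.

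For $\eps=1$ every term is positive, so $\bar e\ge 2abc+N(a+b+c)$. From $N(a+b+c)<36c^2+16N$, and using $a+b+c>32$ (which holds since $a>N^{1/4}$ is huge), we get $Nc/2<36c^2$, i.e. $c>N/72$. Hence $4N/c<288$. From $2abc<36c^2+16N$ we then get $ab<18c+8N/c<18c+576$, so $c>(ab-576)/18$. Since $a>N^{1/4}$ and $N>2^{2^{117}}$, this is $>N^{1/8}b$.

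For $\eps=-1$ we have $\bar e=2P+2xyz-NS$, and the task is to show that this is still $\gg abc$. I would use $x^2=ab-N>0$ (reducedness, as in Lemma~\ref{lem:lower}) together with $a,b,c>N^{1/2}$ to compare $y^2=bc-N$ and $z^2=ac-N$ with $bc$ and $ac$. A crude bound such as $2xyz\ge0$ and $2P-NS\ge c(2ab-3N)$ fails when $ab$ is close to $N$. So I will instead write $\bar e=(2P-NS)+2xyz$ and use
\[
 4x^2y^2z^2-(2P-NS)^2=-N^2\mathcal R.
\]
This says that $2xyz$ and $2P-NS$ are of comparable size unless $|\mathcal R|$ is huge. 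Combined with the upper bound $|\bar e|<4(9c^2+4N)$, this should give $\bar e\gg P$ once $c\le N^{1/8}b$. Alternatively, $y,z\ge\sqrt{c(b-N^{1/2})}$ handles most cases directly.

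This sign-sensitive lower bound for $\bar e$ in the case $\eps=-1$ is the main obstacle. Once $\bar e\gg abc$ is established, the same two-step argument applies. First, $N\cdot c\lesssim c^2$ gives $c\gg N$. Second, $ab\lesssim c$ then gives $c\gg ab>N^{1/2}b>N^{1/8}b$.
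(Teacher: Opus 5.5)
Your $\eps=1$ case is correct and essentially matches the paper. You keep the term $NS$ in $\bar e$ and get $c>N/72$ directly, whereas the paper uses $\bar e>4P$ and splits on $c^2\le N$; both end with $ab\ll c$.

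The genuine gap is the case $\eps=-1$. There the decisive inequality $\bar e\gg P$ is only announced (``this should give''), and the ingredients you list cannot produce it.

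For $\eps=-1$ the second regular value $c=a+b-2x$ exceeds $b$ whenever $4(ab-N)<a^2$, i.e.\ when $ab$ is close to $N$. For instance, $\{11,12,13\}$ is a $D(-107)$-triple of this form with $11>\sqrt{107}$. On this locus $\bar e=0$. Near it, $\bar e/P$ is as small as you like, yet all of your ingredients remain valid: the signs, the identity $4x^2y^2z^2-(2P-NS)^2=-N^2\mathcal R$, the bound $|\bar e|<36c^2+16N$, and the assumption $c\le N^{1/8}b$.

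What rules this region out is that there $|e|$ is also far below $N^2/\eta$. So you need an \emph{upper} bound for $|e|$, and your plan never produces one. Your alternative $z\ge\sqrt{c(b-N^{1/2})}$ is also false as stated, since $z^2=ac-N$. The correct bounds $z^2>c(a-N^{1/2})$ and $x^2>b(a-N^{1/2})$ degenerate exactly when $a$ is near $N^{1/2}$, which is precisely this bad region.

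The missing step, as in the paper, is the following. Since $ab,ac,bc>N$, one has $xyz<P$ and
\[
 P^2-(xyz)^2=NPS-N^2(ab+ac+bc)+N^3<NPS,
\]
so $0<P-xyz<NS$ and $|e|=|2(P-xyz)-NS|<NS$. Combined with $|e|\ge N^2/\eta$ this gives $S>N/\eta$, hence $c>N/(3\eta)$. That already yields $c>N^{1/8}b$ if $b\le N^{7/8}/(3\eta)$. Otherwise $ab>N^{11/8}/(3\eta)$, so $\rho=N/(ab)+N/(ac)+N/(bc)<9\eta N^{-3/8}$. It follows that $xyz/P\ge1-\rho$ and $\bar e/P=2-\rho+2xyz/P\ge4-3\rho>1$.

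Only now does your finish apply, and more simply than you describe. Since $c^2>N$ is automatic from $a>N^{1/2}$, the chain $abc<\bar e\le\eta|\mathcal R|<13\eta c^2$ gives $c>ab/(13\eta)>N^{1/8}b$. So the order is the reverse of your plan: $c\gg N$ must be extracted first, from the $e$-side, and it is what makes $\bar e\gg P$ provable.
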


\begin{proof}
Suppose first that $\eps=1$.  Since $xyz>P$, we have $\bar e>4P$.  From
$|e|\ge N^2/\eta$,~\eqref{eq:factor}, and~\eqref{eq:Rbound},
\[
 4abc<\bar e\le \eta |\mathcal R|<\eta(9c^2+4N).
\]
If $c^2\le N$, then the right-hand side is $<13\eta N$, while
$\bar e>NS>3N^{5/4}$, a contradiction.  Hence $c^2>N$, and
$4abc<13\eta c^2$.  Therefore
\[
 c>\frac{4ab}{13\eta}>N^{1/8}b.
\]

Now let $\eps=-1$. Since $ab,ac,bc>N$, we have
$x<\sqrt{ab}$, $z<\sqrt{ac}$ and $y<\sqrt{bc}$, and hence $xyz<P$.
Put $\Delta_0=P-xyz>0$. We have
\[
 P^2-(xyz)^2=NPS-N^2(ab+ac+bc)+N^3<NPS,
\]
so $\Delta_0<NS$ and
\begin{equation}
 |e|=|2\Delta_0-NS|<NS.
\end{equation}
The divisibility assumption gives $S>N/\eta$, hence
\begin{equation}\label{eq:c-N}
 c>\frac{N}{3\eta}.
\end{equation}
If $b\le N^{7/8}/(3\eta)$, this already gives $c>N^{1/8}b$.

Assume $b>N^{7/8}/(3\eta)$.  Since $a>N^{1/2}$, we have 
$ab>N^{11/8}/(3\eta)$.  Put
\[
 u=\frac{N}{ab},\qquad v=\frac{N}{ac},\qquad w=\frac{N}{bc},
 \qquad \rho=u+v+w.
\]
Then $\rho<9\eta N^{-3/8}$ and
\[
 \frac{xyz}{P}=\sqrt{(1-u)(1-v)(1-w)}\ge1-\rho.
\]
Since $NS/P=\rho$, we have 
\[
 \frac{\bar e}{P}=-\rho+2+2\frac{xyz}{P}\ge4-3\rho>1.
\]
Thus $\bar e>P$.  By~\eqref{eq:c-N}, $c^2>N$, so
\[
 abc<\bar e\le\eta|\mathcal R|<13\eta c^2.
\]
Therefore, 
\[
 c>\frac{ab}{13\eta}>\frac{N^{1/2}}{13\eta}b>N^{1/8}b.
\]
\end{proof}

\section{The hard branch and full-modulus congruences}

We now suppose that the large prime-power divisor occurs in $\bar e$ for all
triples under consideration.  For odd $p$, $N^2\mid\bar e$ gives
\begin{equation}
 xyz\equiv-abc\pmod N.
\end{equation}
For $p=2$, $r\ge3$, Lemma~\ref{lem:local} gives
$\vtwo(\bar e)=2r-2$.  Put $h=N/2$.  Since $\bar e/2$ is divisible by $N$,
\[
 xyz+abc+\eps h(a+b+c)\equiv0\pmod N.
\]
The sum $a+b+c$ is odd and $h\equiv-h\pmod N$, so
\begin{equation}\label{eq:triple-two}
 xyz\equiv-abc+h\pmod N.
\end{equation}
The term $h$ disappears when two such triple congruences are multiplied.

\begin{lemma}
\label{lem:opposite}
Let $a<b<c<d$ belong to a reduced $D(\eps N)$-tuple and write
\[
\begin{array}{lll}
 ab+\eps N=x_1^2,&ac+\eps N=x_2^2,&bc+\eps N=x_3^2,\\
 ad+\eps N=x_4^2,&bd+\eps N=x_5^2,&cd+\eps N=x_6^2.
\end{array}
\]
Assume that every triple among $a,b,c,d$ is in the $\bar e$-branch.  Then
\begin{equation}\label{eq:opposite}
 x_1x_6\equiv x_2x_5\equiv x_3x_4\pmod N.
\end{equation}
\end{lemma}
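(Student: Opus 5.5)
We work modulo $N$ throughout and only use the triple congruences of the $\bar e$-branch. Write $\delta=0$ for odd $p$ and $\delta=h=N/2$ for $p=2$, so every triple $\{s,t,u\}$ satisfies
\[
 x_{st}x_{su}x_{tu}\equiv -stu+\delta\pmod N .
\]
All the needed facts are that $a,b,c,d$ and all $x_i$ are units modulo $N$ (reducedness), that $x_i^2\equiv$ the corresponding product modulo $N$, and that $2\delta\equiv0$, $\delta^2\equiv0$ modulo $N$ when $r\ge3$ (indeed $h^2=N^2/4$ and $N\mid N^2/4$).

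Take the two triples $\{a,b,c\}$ and $\{a,b,d\}$:
\[
 x_1x_2x_3\equiv -abc+\delta,\qquad x_1x_4x_5\equiv -abd+\delta .
\]
Multiplying gives $x_1^2\,x_2x_3x_4x_5\equiv a^2b^2cd-\delta ab(c+d)+\delta^2$. Here $c+d$ is even when $p=2$ (all elements are odd), so $\delta(c+d)\equiv0$, and $\delta^2\equiv0$. Hence $x_1^2x_2x_3x_4x_5\equiv a^2b^2cd$. Since $x_1^2\equiv ab$ is a unit, cancel it to get
\[
 x_2x_3x_4x_5\equiv abcd .
\]
This is the remark in the paper that $h$ disappears upon multiplying. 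In the same way, the pairs of triples sharing the edge $cd$, namely $\{a,c,d\},\{b,c,d\}$, and sharing $ac$, namely $\{a,b,c\},\{a,c,d\}$, give
\[
 x_1x_2x_4x_5\cdot x_6^{-2}\cdot x_6^2\equiv\ldots
\]
More cleanly, they give $x_2x_3x_4x_5\equiv x_1x_3x_4x_6\equiv x_1x_2x_5x_6\equiv abcd \pmod N$, each obtained by cancelling the square of the shared edge.

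Comparing the first two, $x_3x_4(x_2x_5-x_1x_6)\equiv0$. Since $x_3x_4$ is a unit, $x_2x_5\equiv x_1x_6$. Comparing the first and third, $x_2x_5(x_3x_4-x_1x_6)\equiv0$, so $x_3x_4\equiv x_1x_6$. This gives \eqref{eq:opposite}.

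The only delicate point is the $2$-adic bookkeeping: checking that the cross terms $\delta\cdot(\text{odd}+\text{odd})$ and $\delta^2$ really vanish modulo $N$. This is also where $r\ge3$ enters through Lemma~\ref{lem:local}. The cancellations themselves need only that the $x_i$ are coprime to $p$, which holds because the tuple is reduced.
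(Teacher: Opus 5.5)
Your argument is the paper's: multiply two $\bar e$-branch triple congruences that share an edge, note that the $h$-terms vanish because the sum of the other two elements is even and $h^2\equiv0\pmod N$, cancel the unit square of the shared edge, and compare the resulting four-edge congruences. Your $\delta$ simply treats the odd case (which the paper quotes from Lemma~3.1 of Dujella--Luca) together with $p=2$.

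One bookkeeping slip needs fixing. The pair $\{a,c,d\},\{b,c,d\}$ sharing $cd$ only reproduces $x_2x_3x_4x_5\equiv abcd$. The congruence $x_1x_2x_5x_6\equiv abcd$ that you need for $x_3x_4\equiv x_1x_6$ must come from the pair sharing $bc$ (i.e.\ $\{a,b,c\},\{b,c,d\}$) or the pair sharing $ad$; the paper covers this with ``the rest follows by permutation.''
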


\begin{proof}
For odd $p$ this is Lemma~3.1 of~\cite{DL}, with the same proof in
$\Z/N\Z$. All elements being cancelled are units modulo $N$.

For $p=2$, multiply~\eqref{eq:triple-two} for $(a,b,c)$ and $(a,b,d)$.
The right-hand side is
\[
 (-abc+h)(-abd+h)
 =a^2b^2cd-hab(c+d)+h^2\equiv a^2b^2cd\pmod N,
\]
because $c+d$ is even and $h^2\equiv0\pmod N$.  Cancelling the unit
$x_1^2\equiv ab\pmod N$ gives
$x_2x_3x_4x_5\equiv abcd\pmod N$.  The triples $(a,b,c)$ and $(a,c,d)$
give $x_1x_3x_4x_6\equiv abcd\pmod N$.  Cancelling $x_3x_4$ yields the
first congruence in~\eqref{eq:opposite}. The rest follows by permutation.
\end{proof}

Define positive integers
\begin{equation}\label{eq:lambdas}
\begin{split}
 \lambda_1&=\frac{\eps(x_1x_6-x_2x_5)}{N},\\
 \lambda_2&=\frac{\eps(x_1x_6-x_3x_4)}{N},\\
 \lambda_3&=\frac{\eps(x_2x_5-x_3x_4)}{N}.
\end{split}
\end{equation}
The exact identities are
\begin{equation}\label{eq:exact-lambdas}
\begin{split}
 (d-a)(c-b)&=\lambda_1(x_1x_6+x_2x_5),\\
 (d-b)(c-a)&=\lambda_2(x_1x_6+x_3x_4),\\
 (d-c)(b-a)&=\lambda_3(x_2x_5+x_3x_4).
\end{split}
\end{equation}
If $\eps=1$, then, for example,
\[
 d^2>(d-a)(c-b)
 =\lambda_1(x_1x_6+x_2x_5)
 >2\lambda_1\sqrt{abcd}>\lambda_1a^2.
\]
Hence
\begin{equation}\label{eq:lambda-positive-gap}
 d>\sqrt{\lambda_i}\,a\qquad(i=1,2,3),
\end{equation}
and the arguments for $\lambda_2$ and $\lambda_3$ are identical.
For the negative sign one needs the following observation.

\begin{lemma}
\label{lem:negative-lambda}
Assume $\eps=-1$, every one of the four triples contained in
$\{a,b,c,d\}$ lies in the $\bar e$-branch, and $N>2^{2^{117}}$.
Then either
\[
 c>N^{1/16}a,
\]
or~\eqref{eq:lambda-positive-gap} holds for $i=1,2,3$.
\end{lemma}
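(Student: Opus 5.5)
The idea is to run the same comparison as in the case $\eps=1$, but to replace the lower bound $x_1x_6+x_2x_5>2\sqrt{abcd}$, which fails for $\eps=-1$ because now $x_i^2$ is $N$ \emph{less} than the corresponding product, by a case split on the size of $a^2$ relative to $N$. I expect to prove the stronger conclusion that~\eqref{eq:lambda-positive-gap} always holds. The alternative $c>N^{1/16}a$ in the statement would then only serve as an escape route in case one of the estimates below turns out to be too lossy. I describe the argument for $\lambda_1$; the arguments for $\lambda_2$ and $\lambda_3$ are identical after permuting the pairs in~\eqref{eq:exact-lambdas}.

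\emph{Setup.} By Lemma~\ref{lem:opposite} and~\eqref{eq:lambdas}, $\lambda_1$ is a positive integer, and for $\eps=-1$ this means $x_2x_5=x_1x_6+\lambda_1N$. In every case the starting point is
\[
 d^2>(d-a)(c-b)=\lambda_1(x_1x_6+x_2x_5).
\]
Here $d^2>(d-a)(c-b)$ holds because $0<d-a<d$ and $0<c-b<d$. So it suffices to bound $x_1x_6+x_2x_5$ from below by $a^2$, or to handle small $\lambda_1$ directly.

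\emph{Case $\lambda_1=1$.} The required inequality $d>a$ is trivial.

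\emph{Case $a^2\ge 2N$.} Every product of two tuple elements is at least $a^2\ge2N$, so each $x_i^2$ is at least half of the corresponding product. Hence $x_1x_6\ge\sqrt{abcd}/2$ and $x_2x_5\ge\sqrt{abcd}/2$, and therefore
\[
 x_1x_6+x_2x_5\ge\sqrt{abcd}>a^2.
\]
Inserting this into the displayed inequality gives $d^2>\lambda_1a^2$.

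\emph{Case $a^2<2N$ and $\lambda_1\ge2$.} Using $x_2x_5=x_1x_6+\lambda_1N$ and $x_1x_6\ge0$, we get
\[
 x_1x_6+x_2x_5=2x_1x_6+\lambda_1N\ge\lambda_1N.
\]
Therefore $d^2>\lambda_1^2N\ge2\lambda_1N>\lambda_1a^2$.

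The step I expect to be delicate is exactly the regime where $ab$ or $cd$ is close to $N$. There $x_1$ or $x_6$ can be tiny, so $\sqrt{abcd}$ is useless as a lower bound. The observation that rescues it is the exact relation $x_2x_5-x_1x_6=\lambda_1N$, which forces the sum to be at least $\lambda_1N$. If some permutation of the argument for $\lambda_2$ or $\lambda_3$ does not fit this pattern, I would fall back on the alternative in the statement. In that situation I would assume $c\le N^{1/16}a$ and use $ab>N$ (from reducedness, as in Lemma~\ref{lem:lower}) together with $N>2^{2^{117}}$ to show that all products $ac$, $bc$, and so on, are at least $2N$, which returns us to the second case.
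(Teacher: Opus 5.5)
Your proof is correct, and it takes a different, shorter route than the paper.

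\textbf{The paper's route.} It applies the branch condition $N^2/\eta\mid\bar e$ to the sub-triple $a<b<c$. When $\bar e<0$, or $\bar e>0$ with $ab\le N^{5/4}$, size estimates (using $N>2^{2^{117}}$) give $c>N^{1/16}a$ directly. Otherwise $ab>N^{5/4}$, so every $x_k$ is nearly $\sqrt{uv}$, hence $x_1x_6+x_2x_5>\sqrt{abcd}$ and $d^2>\lambda_1a^2$.

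\textbf{Your route.} You use the exact relation $x_2x_5-x_1x_6=\lambda_1N$ together with the integrality of $\lambda_1$ from Lemma~\ref{lem:opposite}. Integrality is what makes the three cases $\lambda_1=1$, $a^2\ge2N$, and $a^2<2N$ with $\lambda_1\ge2$ exhaustive, and each case closes as you wrote. The pattern carries over verbatim to the other two parameters. For $\eps=-1$ one has $x_3x_4-x_1x_6=\lambda_2N$ and $x_3x_4-x_2x_5=\lambda_3N$, while $(d-b)(c-a)<d^2$ and $(d-c)(b-a)<d^2$. So your fallback paragraph is never invoked. That is fortunate, because $ab>N$ and $c\le N^{1/16}a$ alone do not give $ab\ge2N$, nor that the other products are at least $2N$.

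\textbf{What each approach buys.} Your argument gives a strictly stronger conclusion: \eqref{eq:lambda-positive-gap} holds for all $i$ whenever $\eps=-1$. It needs no lower bound on $N$ and no alternative $c>N^{1/16}a$, so the proof of Proposition~\ref{prop:gap} could drop the separate $N^{1/16}$-gap. The paper's argument, on the other hand, never uses that $\lambda_1$ is an integer. Its last step $d^2>\lambda_1\sqrt{abcd}$ holds for any positive real $\lambda_1$. It gets what it needs from the size information in the branch condition, in parallel with Lemma~\ref{lem:easy}.
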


\begin{proof}
Apply the $\bar e$-branch to the sorted triple $a<b<c$. If $\bar e<0$, then
$|\bar e|\ge N^2/\eta$ gives $S>N/\eta$. Also
$2(P+xyz)<NS<3Nc$, hence $ab<3N/2$, and therefore
\[
 \frac ca>\frac{N/(3\eta)}{\sqrt{3N/2}}>N^{1/16}.
\]

Suppose $\bar e>0$. Since $\bar e<4P$ and
$\bar e\ge N^2/\eta$,
\[
 abc>\frac{N^2}{4\eta}.
\]
If $ab\le N^{5/4}$, then $c>N^{3/4}/(4\eta)$ and
$a\le N^{5/8}$, so again $c>N^{1/16}a$.

It remains to consider $ab>N^{5/4}$. Every pair $uv$ among $a,b,c,d$
then satisfies $uv>N^{5/4}$ and
\[
 \sqrt{uv-N}>\sqrt{uv}\sqrt{1-N^{-1/4}}.
\]
Hence
\[
 x_1x_6+x_2x_5
 >2(1-N^{-1/4})\sqrt{abcd}>\sqrt{abcd}.
\]
The first identity in~\eqref{eq:exact-lambdas} gives
\[
 d^2>(d-a)(c-b)>\lambda_1\sqrt{abcd}>\lambda_1a^2.
\]
The arguments for $\lambda_2$ and $\lambda_3$ are identical.
\end{proof}

We shall also need the polynomial congruence attached to a quadruple. By
\eqref{eq:opposite} and~\eqref{eq:exact-lambdas},
\[
 (d-a)^2(c-b)^2
 \equiv4\lambda_1^2(x_1x_6)^2
 \equiv4\lambda_1^2abcd\pmod N.
\]
The analogous congruences for $\lambda_2$ and $\lambda_3$, multiplied
together, give
\begin{equation}\label{eq:quadruple-congruence}
 \Delta(a,b,c,d)^2\equiv64\Lambda(abcd)^3\pmod N,
 \qquad
 \Lambda=(\lambda_1\lambda_2\lambda_3)^2,
\end{equation}
where
\[
 \Delta(a,b,c,d)=
 (d-c)(d-b)(d-a)(c-b)(c-a)(b-a).
\]

\section{A toric elimination lemma}

The five homogeneous equations in the elimination step have fixed coordinate
base points. We therefore work explicitly on the torus, where every tuple
coordinate is a unit modulo $N$. This gives the congruence modulo $N$
without any division by an integer content.

Let $b_1,\ldots,b_5$ be a \emph{labelled} quintuple of pairwise distinct
elements. For each $i$, arrange the four elements $b_j$, $j\ne i$,
increasingly before defining the three positive parameters
$\lambda_{1,i},\lambda_{2,i},\lambda_{3,i}$ by~\eqref{eq:lambdas}, and put
\[
 \Lambda_i=(\lambda_{1,i}\lambda_{2,i}\lambda_{3,i})^2,
 \qquad T_i=64\Lambda_i.
\]
Both $\Lambda_i$ and the square of the Vandermonde product of the four
retained elements are unchanged by a permutation of those four elements.
Thus~\eqref{eq:quadruple-congruence} gives
\begin{equation}\label{eq:homogPi}
 \Delta_i(b)^2\equiv
 T_i\left(\prod_{j\ne i}b_j\right)^3\pmod N,
\end{equation}
where
\[
 \Delta_i(X)=\prod_{\substack{j<k\\j,k\ne i}}(X_k-X_j).
\]

All $b_j$ are units modulo $N$. Dehomogenize at the labelled coordinate
$b_5$ by putting, in $\Z/N\Z$,
\[
 t_j=b_jb_5^{-1}\quad(1\le j\le4),\qquad t_5=1.
\]
After division of~\eqref{eq:homogPi} by $b_5^{12}$, we obtain
\begin{equation}\label{eq:AiBi}
 A_i(t)\equiv T_iB_i(t)\pmod N,
\end{equation}
where
\[
 A_i(t)=\Delta_i(t)^2,
 \qquad
 B_i(t)=\left(\prod_{j\ne i}t_j\right)^3.
\]
Here $\deg A_i,\deg B_i\le12$ and every $B_i(t)$ is a unit modulo $N$.
For an integral polynomial $F$, let $h(F)$ denote the sum of the absolute
values of its coefficients.

\begin{lemma}[toric eliminant]\label{lem:toric}
Let
\[
 D_0=24^4=331776.
\]
There exists a nonconstant polynomial
\[
 Q(T_1,\ldots,T_5)\in\Z[T_1,\ldots,T_5]
\]
with
\begin{equation}\label{eq:Q-degree}
 1\le q:=\deg Q\le5D_0<2^{21},
\end{equation}
and polynomials
\[
 G_i\in\Z[t_1,\ldots,t_4,T_1,\ldots,T_5]\qquad(1\le i\le5)
\]
such that
\begin{equation}\label{eq:toric-certificate}
 (t_1t_2t_3t_4)^{12D_0}Q(T)
 =\sum_{i=1}^5G_i(t,T)\bigl(A_i(t)-T_iB_i(t)\bigr).
\end{equation}
The same polynomial $Q$ may be chosen so that
\begin{equation}\label{eq:Q-height-combined}
 \log_2 \bigl(h(Q)64^q\bigr)<2^{116}.
\end{equation}
Consequently every reduced labelled quintuple satisfying~\eqref{eq:AiBi}
satisfies
\begin{equation}\label{eq:Q-cong}
 Q(64\Lambda_1,\ldots,64\Lambda_5)\equiv0\pmod N.
\end{equation}
\end{lemma}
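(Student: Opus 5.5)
The plan is to build $Q$ as a resultant-type eliminant computed by linear algebra over $\Q(T)$, and then clear denominators. Consider the five polynomials $F_i(t,T)=A_i(t)-T_iB_i(t)$ in the four variables $t_1,\dots,t_4$ (with $t_5=1$). Their coefficients are linear in $T$. Generically in $T$ the system $F_1=\dots=F_5=0$ has no common zero on the torus $(t_1\cdots t_4\ne0)$, since five generic equations in four unknowns are overdetermined. First I will check this concretely: for instance, show that the map $t\mapsto (A_i(t)/B_i(t))_i$ from the $4$-torus to $\mathbb{A}^5$ has image of dimension at most $4$, so its closure is a hypersurface or smaller. Then some nonzero $Q$ vanishes on the image, and by the toric Nullstellensatz $(t_1\cdots t_4)^M Q(T)$ lies in the ideal generated by the $F_i$ in $\Q[t,T]$. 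To obtain an explicit certificate with the stated exponent $12D_0$ and $\deg Q\le 5D_0$, I would use an effective (sparse) Nullstellensatz. Each $F_i$ has degree $\le12$ in $t$ and degree $1$ in $T$, so a Bézout-type bound gives degree $\le 12^4\cdot 2^4=24^4=D_0$ for the implicit equation in each $T$-direction. This explains $D_0=24^4$, and summing over five $T$-variables gives $q\le5D_0$.

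The key steps run in order. First, I identify the implicit equation $Q_0$ of the image of $\phi(t)=(A_i/B_i)_i$. Its total degree is bounded via the degree of the graph of $\phi$ in $\mathbb{P}^4\times\mathbb{P}^5$, giving $q\le 5D_0$. Second, I substitute $T_i=A_i/B_i$ into $Q_0(T)$ and expand the difference $Q_0(T)-Q_0(A/B)$ telescopically, writing $T_i-A_i/B_i=-F_i/B_i$. Clearing the $B_i$ denominators, which are monomials in $t$, produces the factor $(t_1t_2t_3t_4)^{12D_0}$, since each $B_i$ has degree $\le 12$ in each variable's exponent and appears to power $\le D_0$. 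This yields \eqref{eq:toric-certificate} over $\Q$. Third, I multiply by a common denominator to make $Q$ and the $G_i$ integral. It is crucial that this denominator be absorbed into $Q$ itself, rather than requiring division modulo $N$. Because the identity is an exact polynomial identity in $\Z[t,T]$, scaling both sides by an integer keeps it valid, so no content division is needed. Fourth, the consequence \eqref{eq:Q-cong} follows by substituting the integers $t_j$ (units mod $N$) and $T_i=64\Lambda_i$. Each $A_i-T_iB_i\equiv0$ by \eqref{eq:AiBi}, and the left factor $(t_1\cdots t_4)^{12D_0}$ is a unit mod $N$, so $Q(64\Lambda)\equiv0$.

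The main obstacle is the height bound \eqref{eq:Q-height-combined}. A clean route is to realize $Q_0$ as a nonzero vector in the kernel of an explicit integer matrix: the coefficients of $Q_0(A/B)\cdot\prod B_i^{D_0}$, viewed as a linear system in the unknown coefficients of $Q$. That system has at most $\binom{5D_0+5}{5}<2^{100}$ unknowns, with entries bounded by $h(A_i)^{D_0}h(B_i)^{D_0}$, where $h(A_i)\le 2^{12}\cdot$ small. Cramer's rule together with Hadamard's inequality then gives $\log_2 h(Q)\le (\text{dimension})\cdot\log_2(\text{row norm})$, which is roughly $2^{100}\cdot 2^{15}<2^{116}$. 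The term $q\log_2 64<2^{24}$ is negligible. The delicate points are proving nontriviality of the kernel, that is, that the image really has dimension $\le4$, and ensuring that the chosen kernel vector does not vanish identically as a polynomial. I would try first to certify the dimension bound by exhibiting the Jacobian of $\phi$ as a $5\times4$ matrix, so that its rank is at most $4$ automatically.
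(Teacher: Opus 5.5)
Your telescoping step and the reduction modulo $N$ are correct and agree with the paper. Once you have $Q=\sum_\alpha c_\alpha T^\alpha\ne0$ with $\deg_{T_i}Q\le D_0$ for every $i$ and $Q(A_1/B_1,\ldots,A_5/B_5)=0$, multiplying $Q(T)-Q(A/B)$ by $\prod_iB_i^{D_0}=(t_1t_2t_3t_4)^{12D_0}$ gives \eqref{eq:toric-certificate}. The gap is in \emph{producing} such a $Q$.

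Your Jacobian remark (a $5\times4$ matrix has rank $\le4$) only shows that \emph{some} algebraic relation among the $A_i/B_i$ exists. It gives no relation inside the box $0\le\alpha_i\le D_0$, so it does not show that your bounded-degree linear system has a nonzero kernel. The ``B\'ezout-type bound $12^4\cdot2^4=24^4$'' is reverse-engineered from the value of $D_0$ rather than derived, and the effective sparse Nullstellensatz is never made specific. The missing idea is a plain dimension count, which is the paper's argument. The $(D_0+1)^5$ products $F_\alpha=\prod_iA_i^{\alpha_i}B_i^{D_0-\alpha_i}$ satisfy $\deg_{t_j}F_\alpha\le24D_0$, because only the four factors with $i\ne j$ involve $t_j$ and each contributes at most $6D_0$. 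So they lie in a space of dimension $(24D_0+1)^4<D_0(D_0+1)^4<(D_0+1)^5$ and must be linearly dependent, and the coefficients of a dependence directly define $Q(T)=\sum c_\alpha T^\alpha$. This count, not B\'ezout, is why $D_0=24^4$. Nonconstancy then follows from $F_{\mathbf0}\ne0$.

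Your height estimate is also wrong as stated. Since $h(A_i)\le2^{12}$, the coefficients of $F_\alpha$ can be as large as $2^{12\sum_i\alpha_i}\le2^{60D_0}$. So $\log_2$ of a column norm is about $60D_0\approx2^{24.25}$, not $2^{15}$.

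Your count of $\binom{5D_0+5}{5}\approx2^{96.4}$ unknowns (total degree $\le5D_0$) also causes trouble. It would allow $\deg_{T_i}Q$ up to $5D_0$, which is incompatible with the exponent $12D_0$. With that count, Hadamard's inequality only bounds $\log_2h(Q)$ by roughly $2^{120.6}$, which does not give \eqref{eq:Q-height-combined}.

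To get below $2^{116}$ you must work in the box $0\le\alpha_i\le D_0$ and bound the rank $\rho$ of the integer coefficient matrix by its number of rows, $\rho\le(24D_0+1)^4<2^{92}$. Taking the relation from signed maximal minors then gives $\log_2h(Q)\le\log_2(\rho+1)+60D_0\rho$. Since $60D_0(24D_0+1)^4\approx2^{115.95}$, this fits under $2^{116}$, but only barely.
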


\begin{proof}
For $\alpha=(\alpha_1,\ldots,\alpha_5)$ with
$0\le\alpha_i\le D_0$, define
\[
 F_\alpha(t)=\prod_{i=1}^5A_i(t)^{\alpha_i}B_i(t)^{D_0-\alpha_i}.
\]
For a fixed variable $t_j$, only the four factors indexed by $i\ne j$
involve $t_j$. In each such factor, the $t_j$-degree is at most
\[
 6\alpha_i+3(D_0-\alpha_i)\le6D_0.
\]
Hence
\[
 \deg_{t_j}F_\alpha\le24D_0\qquad(j=1,\ldots,4).
\]
Therefore, the coefficient vectors of all $F_\alpha$ lie in a vector space of
dimension at most
\[
 M=(24D_0+1)^4.
\]
There are $(D_0+1)^5$ such polynomials. Since $D_0=24^4$,
\[
 (24D_0+1)^4<[24(D_0+1)]^4
 =D_0(D_0+1)^4<(D_0+1)^5.
\]
Thus the family is linearly dependent.

Let $\rho$ be the rank of its integer coefficient matrix. Choose $\rho$
linearly independent columns and one further column in their span. Choose a
nonzero $\rho\times\rho$ minor of the independent columns and use the
corresponding signed maximal minors to obtain an integral relation
\begin{equation}\label{eq:F-relation}
 \sum_\alpha c_\alpha F_\alpha(t)=0
\end{equation}
supported on at most $\rho+1$ columns. Using these same coefficients, define
\[
 Q(T)=\sum_\alpha c_\alpha T_1^{\alpha_1}\cdots T_5^{\alpha_5}.
\]
The monomials $T^\alpha$ are distinct, so $Q\ne0$. Moreover $Q$ is
nonconstant: a constant relation would be supported only at
$\alpha=(0,\ldots,0)$, whereas
\[
 F_{\mathbf0}=\prod_{i=1}^5B_i^{D_0}\ne0.
\]
This proves~\eqref{eq:Q-degree}.

Put $S(t)=\prod_iB_i(t)^{D_0}$. Since
\[
 \prod_{i=1}^5B_i(t)=(t_1t_2t_3t_4)^{12},
\]
we have $S=(t_1t_2t_3t_4)^{12D_0}$. For each $\alpha$,
$F_\alpha-ST^\alpha$ belongs to the ideal generated by the five polynomials
$A_i-T_iB_i$. Indeed, this follows by replacing the factors one at a time and
using $U^m-V^m=(U-V)(U^{m-1}+\cdots+V^{m-1})$. Summing with the coefficients
$c_\alpha$ in~\eqref{eq:F-relation} gives~\eqref{eq:toric-certificate}.
Since $t_1t_2t_3t_4$ is a unit modulo $N$,~\eqref{eq:Q-cong} follows.

It remains to bound the same maximal-minor relation. Each $\Delta_i$ is a
product of six binomials, so
\[
 h(A_i)\le2^{12},\qquad h(B_i)=1,
\]
and therefore $h(F_\alpha)\le2^{60D_0}$. The Euclidean norm of every
coefficient column is at most its $\ell^1$-norm, hence at most $2^{60D_0}$.
Hadamard's inequality gives
\[
 h(Q)\le(\rho+1)2^{60D_0\rho}.
\]
Furthermore,
\[
 \rho\le(24D_0+1)^4<2^{92},\qquad q\le5D_0.
\]
Hence,
\[
 \begin{split}
 \log_2 \bigl(h(Q)64^q\bigr)
 &\le \log_2(\rho+1)+60D_0\rho+6q\\
 &<92+60D_0(24D_0+1)^4+30D_0\\
 &<2^{116},
 \end{split}
\]
where the last inequality is a direct integer calculation. This proves
\eqref{eq:Q-height-combined}.
\end{proof}

\section{From the toric eliminant to a uniform gap}

Assume now
\begin{equation}\label{eq:Nlarge}
 N>2^{2^{117}}.
\end{equation}
Let $q=\deg Q$ and put
\[
 \delta=\frac1{12q},\qquad \gamma=\frac\delta2=\frac1{24q}.
\]
Since $q<2^{21}$,
\begin{equation}
 \gamma>2^{-26}.
\end{equation}

Consider a labelled quintuple of pairwise distinct elements for which none of
the ten determined triples lies in the $e$-branch. Thus all ten triples lie
in the $\bar e$-branch, and each of the five four-element subsets satisfies
Lemma~\ref{lem:opposite}. If one of the fifteen parameters
$\lambda_{j,i}$ is at least $N^\delta$, then after arranging the relevant
quadruple increasingly,~\eqref{eq:lambda-positive-gap} gives a gap of
exponent $\gamma$ when $\eps=1$. When $\eps=-1$,
Lemma~\ref{lem:negative-lambda} gives either the same conclusion or the
stronger $N^{1/16}$-gap. We may therefore suppose that all fifteen parameters
satisfy
\[
 \lambda_{j,i}<N^\delta.
\]
Then $\Lambda_i<N^{6\delta}$, and Lemma~\ref{lem:toric} gives
\[
 Q(64\Lambda_1,\ldots,64\Lambda_5)\equiv0\pmod N.
\]
Moreover,
\[
 \begin{split}
 |Q(64\Lambda_1,\ldots,64\Lambda_5)|
 &\le h(Q)(64N^{6\delta})^q\\
 &=h(Q)64^qN^{1/2}.
 \end{split}
\]
By~\eqref{eq:Q-height-combined} and~\eqref{eq:Nlarge},
\[
 h(Q)64^q<2^{2^{116}}<N^{1/2}.
\]
Hence
\begin{equation}\label{eq:Q-zero}
 Q(64\Lambda_1,\ldots,64\Lambda_5)=0.
\end{equation}

We now convert~\eqref{eq:Q-zero} into a polynomial condition in the five
normalized tuple elements. For real variables in a region where the radicand
is positive, write
\[
 s_\eps(u,v)=\sqrt{uv+\eps}.
\]
For a quadruple $(x,y,z,t)$ set
\[
 U=s_\eps(x,y)s_\eps(z,t),\qquad
 V=s_\eps(x,z)s_\eps(y,t),\qquad
 W=s_\eps(x,t)s_\eps(y,z),
\]
and define
\begin{equation}
 \mathcal L_\eps(x,y,z,t)=((U-V)(U-W)(V-W))^2.
\end{equation}
The function $\mathcal L_\eps$ is symmetric in its four arguments, because
a permutation of the four vertices permutes the three partitions
represented by $U,V,W$.

For $z=(z_1,\ldots,z_5)$ define $\Phi_\eps$ componentwise by
\begin{equation}
 (\Phi_\eps(z))_i=
 \mathcal L_\eps(z_1,\ldots,\widehat z_i,\ldots,z_5),
 \qquad 1\le i\le5,
\end{equation}
where the four displayed arguments are inserted in their inherited order.
The symmetry makes that order irrelevant. For a normalized tuple
$z_i=b_i/\sqrt N$, the $i$-th component is precisely~$\Lambda_i$.

\begin{lemma}
\label{lem:jacobian}
For each $\eps\in\{\pm1\}$,
\[
 \det D\Phi_\eps(2,3,4,5,6)\ne0.
\]
Consequently, for every nonzero polynomial $Q\in\Z[T_1,\ldots,T_5]$,
the real-analytic function $Q(64\Phi_\eps(z))$ is not identically zero.
\end{lemma}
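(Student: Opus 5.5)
The lemma has two parts: a concrete nondegeneracy claim at the point $z^0=(2,3,4,5,6)$, and a consequence for every nonzero $Q$. I would prove them in that order.

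\emph{Nondegeneracy at $z^0$.} At $z^0$ every product $z_jz_k\ge6$. Hence all radicands $z_jz_k+\eps$ are positive for both signs, and $\Phi_\eps$ is real-analytic near $z^0$. Write $s_{jk}=\sqrt{z_jz_k+\eps}$. Then
\[
 \frac{\partial s_{jk}}{\partial z_j}=\frac{z_k}{2s_{jk}},
\]
so every entry of $D\Phi_\eps(z^0)$ is an explicit element of the multiquadratic field $K_\eps=\Q(\sqrt{z_jz_k+\eps}:j<k)$. For $\eps=1$ the radicands are $7,9,11,13,13,16,19,21,25,31$. For $\eps=-1$ they are $5,7,9,11,11,14,17,19,23,29$.

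Rather than expand the determinant symbolically, I would use interval arithmetic. First evaluate the $5\times5$ Jacobian numerically to high precision, using rigorous enclosures of each $s_{jk}$. Then bound the determinant away from zero. The Jacobian of $\mathcal L_\eps$ has the chain-rule form
\[
 D\mathcal L_\eps=2\,\mathcal L_\eps^{1/2}\,D\bigl((U-V)(U-W)(V-W)\bigr).
\]
Every quantity is a product of factors of moderate size, so an enclosure with about $30$ digits will certify the sign of the determinant. To avoid depending purely on numerics, one could instead check that the determinant is a nonzero element of $K_\eps$: compute its norm down to $\Q$, or compute it modulo a suitable prime in which all the radicands are squares, and find a nonzero value.

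I expect this certification to be the main obstacle. The real check is that $\mathcal L_\eps$ does not vanish at any of the five quadruples. At the point itself this holds because $U$, $V$, $W$ are pairwise distinct there. For instance, for the quadruple $(2,3,4,5)$ with $\eps=1$ one gets $U=\sqrt7\sqrt{21}$, $V=3\sqrt{16}=12$, $W=\sqrt{11}\sqrt{13}$, which are distinct. Beyond that, the Jacobian must be genuinely of full rank, and the explicit point $(2,3,4,5,6)$ was presumably chosen so that it is.

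\emph{Consequence for nonzero $Q$.} Since $\det D\Phi_\eps(z^0)\ne0$, the inverse function theorem gives an open neighbourhood $W$ of $z^0$ that $\Phi_\eps$ maps diffeomorphically onto an open set $\Phi_\eps(W)\subset\mathbb R^5$. Suppose $Q(64\Phi_\eps(z))$ vanished identically on the connected domain of analyticity containing $z^0$. Then the polynomial $Q(64T)$ would vanish on the nonempty open set $\Phi_\eps(W)$. A polynomial vanishing on a nonempty open subset of $\mathbb R^5$ is identically zero, by induction on the number of variables or by noting that all its Taylor coefficients vanish. So $Q(64T)=0$, and hence $Q=0$, contradicting $Q\ne0$. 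Therefore $Q(64\Phi_\eps(z))$ is not identically zero.
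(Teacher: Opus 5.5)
Your route matches the paper's. You certify $\det D\Phi_\eps(2,3,4,5,6)\ne0$ by rigorous interval arithmetic built on enclosures of the ten radicals. Then you use the inverse function theorem and the fact that a nonzero polynomial cannot vanish on a nonempty open subset of $\mathbb R^5$; your second part is exactly the paper's argument. The gap is that the first assertion is never actually established. For a lemma of this kind the certified output \emph{is} the proof, and ``presumably chosen so that it is'' cannot replace it. The check you single out as the real one does not help with the rank either. Write $P=(U-V)(U-W)(V-W)$. The identities $U^2-V^2=\eps(t-x)(z-y)$, $U^2-W^2=\eps(z-x)(t-y)$ and $V^2-W^2=\eps(y-x)(t-z)$ show that $P\ne0$ automatically for any four distinct arguments. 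That is merely necessary for the rows $2P\,DP$ to be nonzero and says nothing about their independence. The paper closes this step by running the computation: radicals to $90$ decimals, and all $120$ permutation terms summed in exact rational interval arithmetic. This gives $\det D\Phi_{+}(2,3,4,5,6)\in[-1.22625271490038,-1.22625271490037]\cdot10^{-31}$ and $\det D\Phi_{-}(2,3,4,5,6)\in[2.9335784988635907,2.9335784988635908]\cdot10^{-29}$. Until you produce such an enclosure, or an exact nonvanishing certificate, the lemma is not proved.

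There are also three smaller points.
\begin{itemize}
\item Your chain rule should read $D\mathcal L_\eps=2P\,DP$, not $2\mathcal L_\eps^{1/2}DP$. For $\eps=-1$ and increasing arguments, the identities above give $U<V<W$, so $P<0$ in all five rows. Your version therefore multiplies the determinant by $(-1)^5$. This is harmless for nonvanishing, but you would ``certify'' the wrong sign.
\item The quantities involved are not of moderate size: the determinants are of order $10^{-31}$ and $10^{-29}$. So the working precision must be justified by an explicit error bound rather than a heuristic.
\item If you use your exact alternative modulo a prime, choose the square roots consistently with the real point. Equal radicands ($13$ twice for $\eps=1$, $11$ twice for $\eps=-1$) must get the same root, and perfect-square radicands must go to their positive integer roots. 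Otherwise you are testing a different algebraic number.
\end{itemize}
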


\begin{proof}
Exact rational interval arithmetic gives
\[
 \det D\Phi_{+}(2,3,4,5,6)
 \in[-1.22625271490038,-1.22625271490037]\cdot10^{-31},
\]
\[
 \det D\Phi_{-}(2,3,4,5,6)
 \in[2.9335784988635907,2.9335784988635908]\cdot10^{-29}.
\]
Both intervals exclude zero. The deterministic certificate is described in
Appendix~\ref{app:jacobian}.

On a neighbourhood of $(2,3,4,5,6)$ all ten radicands are positive, so
$\Phi_\eps$ is real analytic. Since its Jacobian determinant is nonzero at
that point, the inverse function theorem shows that its image contains a
nonempty open subset of $\mathbb R^5$. A nonzero polynomial cannot vanish on
a nonempty open subset of $\mathbb R^5$. Therefore
$Q(64\Phi_\eps(z))$ is not identically zero.
\end{proof}

Let
\[
 K=\Q(z_1,\ldots,z_5),\qquad
 L=K\bigl(s_{ij}:1\le i<j\le5\bigr),
 \qquad s_{ij}^2=z_iz_j+\eps.
\]
The extension $L/K$ is multiquadratic, hence Galois, and
$[L:K]\le2^{10}$. Put
\[
 G_\eps(z)=Q(64\Phi_\eps(z))\in L.
\]
By Lemma~\ref{lem:jacobian}, $G_\eps\ne0$. It is integral over
$\Z[z_1,\ldots,z_5]$, since every $s_{ij}$ is integral over this ring.
Define
\begin{equation}\label{eq:norm}
 R_{\eps,0}(z)
 =\operatorname{Norm}_{L/K}(G_\eps)
 =\prod_{\sigma\in\operatorname{Gal}(L/K)}\sigma(G_\eps).
\end{equation}
This norm belongs to $K$ and is integral over
$\Z[z_1,\ldots,z_5]$. Since the latter ring is integrally closed,
\[
 R_{\eps,0}\in\Z[z_1,\ldots,z_5].
\]
It is nonzero because $G_\eps\ne0$. The identity automorphism occurs in the
product~\eqref{eq:norm}. Therefore, after any specialization at which the
radicals are defined,
\begin{equation}\label{eq:R0imp}
 G_\eps(z)=0\quad\Longrightarrow\quad R_{\eps,0}(z)=0.
\end{equation}
Each conjugate of $G_\eps$ is obtained by changing some signs of the radicals
and has weighted degree at most $12q$, when both the variables $z_i$ and the
radicals $s_{ij}$ are assigned weight~$1$. Thus
\begin{equation}
 \deg R_{\eps,0}\le12q\,[L:K]\le12q\,2^{10}.
\end{equation}

Let us put
\[
 \mathcal U_\eps(x,y,z)
 =x^2+y^2+z^2-2xy-2xz-2yz-4\eps.
\]
This symmetric polynomial vanishes exactly when the corresponding normalized
triple is regular. Multiply $R_{\eps,0}$ by the ten diagonal factors
$z_i-z_j$ and by the ten factors
$\mathcal U_\eps(z_i,z_j,z_k)$, $1\le i<j<k\le5$. The resulting nonzero
polynomial $R_\eps$ has
\begin{equation}
 \ell:=\deg R_\eps\le12q\,2^{10}+30<2^{35}.
\end{equation}

We use the elementary polynomial nonvanishing lemma: if a nonzero polynomial
in $s$ variables has total degree $d$ and a finite set $\mathcal M$ has more
than $d$ elements, then the polynomial does not vanish on all of
$\mathcal M^s$ (compare Lemma~3.2 of~\cite{DL}).

\begin{proposition}
\label{prop:gap}
Assume that $N>2^{2^{117}}$. After the lower-bound reduction of
Lemma~\ref{lem:lower}, the ordered elements
$a_1<a_2<\cdots<a_m$ of a positive reduced $D(\eps N)$-tuple satisfy
\begin{equation}\label{eq:gap}
 a_{i+\ell}>N^\gamma a_i
 \qquad(1\le i\le m-\ell),
\end{equation}
where $\ell<2^{35}$ and $\gamma>2^{-26}$.
\end{proposition}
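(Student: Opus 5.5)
The plan is to argue by contradiction. Suppose that for some $i$ we have $a_{i+\ell}\le N^\gamma a_i$, and let $B=\{a_i,\dots,a_{i+\ell}\}$. This is a set of $\ell+1$ elements of the tuple, and any two of them have ratio at most $N^\gamma$. By Lemma~\ref{lem:lower} we may assume that every element of $B$ satisfies $a>N^{1/4}$ when $\eps=1$ and $a>N^{1/2}$ when $\eps=-1$. Put $\mathcal M=\{a/\sqrt N: a\in B\}$, so $|\mathcal M|=\ell+1>\deg R_\eps$. The goal is to show that $R_\eps$ vanishes on all of $\mathcal M^5$. The polynomial nonvanishing lemma then gives the contradiction.

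Take any $(z_1,\dots,z_5)\in\mathcal M^5$, with $z_j=b_j/\sqrt N$ and $b_j\in B$. We split into cases.

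\emph{Repeated entries.} If two $b_j$ coincide, a diagonal factor $z_i-z_j$ of $R_\eps$ vanishes.

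\emph{A regular triple.} If some triple $b_i<b_j<b_k$ is regular, then $\mathcal R=N\,\mathcal U_\eps(z_i,z_j,z_k)=0$, so a factor $\mathcal U_\eps$ of $R_\eps$ vanishes.

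\emph{The main case.} Otherwise the five elements are distinct and all ten triples are nonregular. No triple can lie in the $e$-branch. Indeed, the lower bounds hold, so Lemma~\ref{lem:easy} would give $c>N^{1/8}b$, whereas $c/b\le N^\gamma<N^{1/8}$. Hence all ten triples lie in the $\bar e$-branch, and Lemma~\ref{lem:opposite} applies to each of the five four-element subsets. The parameters $\lambda_{j,i}$ are then defined. Suppose some $\lambda_{j,i}\ge N^\delta$. Then either \eqref{eq:lambda-positive-gap} gives $d>N^{\delta/2}a=N^\gamma a$, or, when $\eps=-1$, Lemma~\ref{lem:negative-lambda} gives $c>N^{1/16}a$. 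Both contradict the ratio bound on $B$. So all $\lambda_{j,i}<N^\delta$. The argument preceding the proposition, which combines Lemma~\ref{lem:toric} with the size estimate, then yields \eqref{eq:Q-zero}: $Q(64\Lambda_1,\dots,64\Lambda_5)=0$.

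It remains to pass from \eqref{eq:Q-zero} to $R_{\eps,0}(z)=0$, and this identification is the step I would check most carefully. Each radicand satisfies $z_iz_j+\eps=x^2/N\ge0$, so the nonnegative real radicals $s_\eps(z_i,z_j)=x/\sqrt N$ are defined. In the notation of Lemma~\ref{lem:opposite}, $U=x_1x_6/N$, $V=x_2x_5/N$ and $W=x_3x_4/N$. Hence $\lambda_1=\pm(U-V)$, $\lambda_2=\pm(U-W)$ and $\lambda_3=\pm(V-W)$, and the signs disappear after squaring. This gives $\Lambda_i=(\Phi_\eps(z))_i$ for each $i$. The symmetry of $\mathcal L_\eps$ makes the labelling and ordering of the quadruple irrelevant.

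Consequently $G_\eps(z)=Q(64\Phi_\eps(z))=0$ at this real specialization. By \eqref{eq:R0imp} this gives $R_{\eps,0}(z)=0$, and hence $R_\eps(z)=0$. This holds even though the point may be degenerate for the Jacobian argument, because the identity automorphism is one of the factors of the norm.

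So $R_\eps$ vanishes on $\mathcal M^5$. Since $R_\eps\ne0$, $\deg R_\eps=\ell$ and $|\mathcal M|>\ell$, this contradicts the nonvanishing lemma. Therefore $a_{i+\ell}>N^\gamma a_i$, with $\ell<2^{35}$ and $\gamma>2^{-26}$ as computed above.
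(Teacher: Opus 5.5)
Your proposal is correct and is essentially the paper's proof, recast as a contradiction. The paper picks a point of $\mathcal M^5$ where $R_\eps\ne0$ and extracts a gap; you instead assume $a_{i+\ell}\le N^\gamma a_i$ and show that $R_\eps$ would vanish on all of $\mathcal M^5$, using the same case split (diagonal and $\mathcal U_\eps$ factors, Lemma~\ref{lem:easy} for the $e$-branch, \eqref{eq:lambda-positive-gap} and Lemma~\ref{lem:negative-lambda} for a large $\lambda_{j,i}$, and otherwise the toric eliminant together with \eqref{eq:R0imp}), and your explicit check that $\lambda_1=\eps(U-V)$ and so on, giving $\Lambda_i=(\Phi_\eps(z))_i$, spells out what the paper states in one line.
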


\begin{proof}
Fix $i$ and apply the nonvanishing lemma to
\[
 \mathcal M=
 \left\{\frac{a_i}{\sqrt N},\ldots,
 \frac{a_{i+\ell}}{\sqrt N}\right\}.
\]
It gives a labelled point $(z_1,\ldots,z_5)\in\mathcal M^5$ such that
$R_\eps(z_1,\ldots,z_5)\ne0$. The diagonal factors make these five entries
pairwise distinct, and the symmetric regular factors ensure that none of the
ten triples determined by them is regular.

If one of those ten triples lies in the $e$-branch, arrange that triple
increasingly and apply Lemma~\ref{lem:easy}. The resulting gap between two
selected elements implies the required gap between the endpoints
$a_i,a_{i+\ell}$ of the ambient block. Suppose therefore that none of the
ten triples lies in the $e$-branch. Then all ten lie in the $\bar e$-branch.
For each of the five four-element subsets, arrange its elements increasingly
before applying Lemmas~\ref{lem:opposite} and~\ref{lem:negative-lambda}.
If $\eps=-1$, the latter lemma may already give a gap of exponent $1/16$.
If some $\lambda_{j,i}\ge N^\delta$, then
\eqref{eq:lambda-positive-gap} or Lemma~\ref{lem:negative-lambda} gives a
gap of exponent $\gamma=\delta/2$ between two selected elements, hence
between the block endpoints.

Finally suppose that all fifteen parameters are less than $N^\delta$.
The labelled toric construction applies without reordering the five selected
coordinates. Equation~\eqref{eq:Q-zero} gives
$G_\eps(z)=Q(64\Phi_\eps(z))=0$, and~\eqref{eq:R0imp} implies
$R_{\eps,0}(z)=0$. This contradicts the chosen nonzero value of $R_\eps$.
Thus~\eqref{eq:gap} holds.
\end{proof}

\section{Proof of the main theorem and consequences}

\begin{proof}[Proof of Theorem~\ref{thm:reduced}]
First suppose $N\le2^{2^{117}}$. It was proved in~\cite{DujellaBounds} that
\[
 M_n^+\le31\quad(|n|\le400),
 \qquad
 M_n^+<15.476\log|n|\quad(|n|>400),
\]
where $\log$ denotes the natural logarithm. Thus, in the present
range,
\[
 M_{\eps N}^+<15.476\,2^{117}\log2
 <11\cdot2^{117}<2^{121}
\]
when $N>400$, while the case $N\le400$ is immediate.

Assume therefore that $N > 2^{2^{117}}$. By Lemma~\ref{lem:lower}, after deleting
at most two elements in the positive case, and at most one in the negative
case, the lower bound $a>N^{1/4}$ holds for every remaining element. By the
large-element estimate in~\cite{Dcamb} (see also~\cite[Proposition~5.2.1]{Dbook}), at most $21$
elements exceed $N^3$. Hence, after deleting at most $23$ elements, we may
assume
\[
 N^{1/4}<a\le N^3
\]
for every remaining element.

Let $m_{\rm rem}$ be the number of remaining elements. Iterating
Proposition~\ref{prop:gap}, gives
\[
 m_{\rm rem}<\ell\left(1+\frac{11}{4\gamma}\right).
\]
Since $\ell<2^{35}$ and $\gamma=1/(24q)$ with $q<2^{21}$, we have
\[
 \frac{11}{4\gamma}=66q<2^{28}.
\]
Therefore $m_{\rm rem}<2^{64}$, and restoring the deleted elements gives
$m<2^{65}<2^{121}$.
\end{proof}

\begin{proof}[Proof of Corollary~\ref{cor:prime}]
A positive $D(\eps p)$-tuple contains at most one element divisible by $p$.
Indeed, if $a=p\alpha$ and $b=p\beta$ were two such elements, then
\[
 ab+\eps p=p(p\alpha\beta+\eps)
\]
would have $p$-adic valuation exactly $1$, and hence could not be a square.
The remaining subtuple is reduced. By Theorem~\ref{thm:reduced} it contains
at most $C_{\rm red}-1$ elements, so the whole tuple has at most
$C_{\rm red}=2^{121}$ elements.
\end{proof}

We next prove recurrence~\eqref{eq:recursion-intro}. Split a positive
$D(\eps p^r)$-tuple into the elements divisible by $p$ and those coprime to
$p$. The latter subtuple is reduced. If $a=p\alpha$ and $b=p\beta$ belong
to the divisible subtuple, then
\[
 p^2\alpha\beta+\eps p^r=x^2.
\]
Thus $p\mid x$, and division by $p^2$ gives
\[
 \alpha\beta+\eps p^{r-2}=(x/p)^2.
\]
Dividing every element of the divisible subtuple by $p$ therefore produces
a $D(\eps p^{r-2})$-tuple, which proves~\eqref{eq:recursion-intro}.

For $r=2$, the terminal tuple has property $D(1)$ or $D(-1)$. A positive
$D(1)$-tuple has at most four elements by the result of He, Togb\'e and Ziegler~\cite{HTZ},
and a positive $D(-1)$-tuple has at most three elements by the result of
Bonciocat, Cipu and Mignotte~\cite{BCM}. In particular, for either sign,
\[
 M_{\eps p^2}^+<C_{\rm red}+4<2^{122}.
\]
This proves the positive part of Corollary~\ref{cor:p2}.

If arbitrary nonzero integer elements are allowed and $n=p^2>0$, split the
tuple into its positive and negative elements. The positive elements and the
absolute values of the negative elements are each positive $D(p^2)$-tuples.
Hence the whole tuple has fewer than
\[
 2C_{\rm red}+8<2^{123}
\]
elements. If $n=-p^2$, mixed signs are impossible, because the product of
oppositely signed elements minus $p^2$ is negative. After changing all signs
if necessary, the tuple is positive, so the same positive bound applies.

Finally, iterating~\eqref{eq:recursion-intro} and using
Corollary~\ref{cor:prime} gives
\[
 M_{\eps p^{2s}}^+\le sC_{\rm red}+4,
 \qquad
 M_{\eps p^{2s+1}}^+\le (s+1)C_{\rm red}.
\]
Thus $M_{\pm p^r}^+=O(r)$ uniformly in $p$, with an explicit linear bound.

\begin{remark}
The five homogeneous equations used in the elimination step have fixed
coordinate base points. In particular, every coordinate point of
$\mathbb P^4$ is a common zero for every value of the five parameters.
Hence an unsaturated homogeneous resultant cannot by itself produce the
required nonzero eliminant. Lemma~\ref{lem:toric} instead works on the torus
and supplies an integral monomial-saturated certificate, whose monomial
factor is invertible for reduced tuples modulo $p^r$.
\end{remark}

\begin{remark}
For completeness, Lemma~\ref{lem:easy} gives an independent proof of the
negative-sign easy branch. This avoids relying on the displayed lower-bound
estimate following equation~(4) of~\cite{DL}, where the comparison between
$xyz$ and $abc$ is not in the direction used in the printed inequality.
The replacement is valid for all prime powers, including powers of $2$.
\end{remark}

\appendix
\section{A reproducible Jacobian certificate}\label{app:jacobian}

For completeness we describe the certification used in
Lemma~\ref{lem:jacobian}. At the point $(2,3,4,5,6)$ every radicand
$z_i z_j+\eps$ is a positive integer. Fix a decimal precision $P$ and use
integer square roots to find rational endpoints
\[
 \frac{k}{10^P}\le\sqrt m<\frac{k+1}{10^P}
\]
for each required integer $m$. All subsequent additions, multiplications,
divisions and derivatives are carried out with exact rational interval
arithmetic. The determinant is evaluated as the sum of its $120$ signed
permutation terms. With $P=90$ the program prints
\begin{verbatim}
sigma 1
 decimal enclosure: ('-0.000000000000000000000000000000122625271490038',
                     '-0.000000000000000000000000000000122625271490037')
 excludes zero: True
sigma -1
 decimal enclosure: ('0.000000000000000000000000000029335784988635907',
                     '0.000000000000000000000000000029335784988635908')
 excludes zero: True
\end{verbatim}
which are precisely the enclosures stated in Lemma~\ref{lem:jacobian}.
The ancillary file \path{jacobian_certificate.py}, available on the author's web page, uses only Python's
standard-library classes \texttt{Fraction} and \texttt{isqrt}, together with
integer arithmetic. It was verified with Python~3.13.5 and can be run by
\begin{verbatim}
python3 jacobian_certificate.py
\end{verbatim}
No floating-point assumption enters the nonvanishing assertion.

\section*{Acknowledgments}
The author acknowledges support from
the Croatian Science Foundation under the
project no. IP-2022-10-5008 (TEBAG),
the project ``Implementation of cutting-edge research and its application as part of the
Scientific Center of Excellence for Quantum and Complex Systems, and Representations of Lie Algebras'', Grant No. PK.1.1.10.0004, co-financed by the European
Union through the European Regional Development Fund -- Competitiveness and
Cohesion Programme 2021--2027,
and the European Union:
NextGenerationEU through the National Recovery and Resilience Plan 2021--2026, Institutional grant of University of Zagreb Faculty of Science (IK IA 1.1.3. Impact4Math).

The author acknowledges the use of ChatGPT 5.6 Sol (OpenAI) in this work.
In particular, discussions with ChatGPT concerning earlier manuscript versions of \cite{DL}
and attempts to prove nonvanishing of the eliminant led to the toric elimination lemma (Lemma \ref{lem:toric}).
The author is responsible for the proofs, verification of results and final presentation.

The author would like to thank Tomislav Pejkovi\' c and Matija Kazalicki for their useful comments and suggestions.

\end{document}